\documentclass[a4paper,11pt]{amsart}
\usepackage[english]{babel} 
\usepackage{mathtools} 
\usepackage{amssymb}
\usepackage[T1]{fontenc}
\usepackage{lmodern}
\usepackage{mathrsfs} 
\usepackage{slashed} 
\usepackage[babel=true]{microtype} 
\usepackage[autostyle=true]{csquotes} 
\usepackage[dvipsnames]{xcolor} 
\calclayout

\usepackage[biblatex=true]{embrac} 

\usepackage{tikz}
\usetikzlibrary{cd} 

\usepackage[shortlabels]{enumitem}
\setlist[enumerate,1]{label=\upshape(\arabic*)}
\newlist{myenumi}{enumerate}{1}
\setlist[myenumi,1]{label=\upshape(\roman*)}
\newlist{myenuma}{enumerate}{1}
\setlist[myenuma,1]{label=\upshape(\alph*)}

\numberwithin{equation}{section}
\allowdisplaybreaks[1]

\newtheorem{theorem}{Theorem}[section]
\newtheorem*{theorem*}{Theorem}
\usepackage{thmtools}

\definecolor{Heather}{RGB}{164, 132, 172}
\usepackage[pdfusetitle,bookmarks,pdfpagelabels]{hyperref} %
\hypersetup{
  colorlinks,
  urlcolor=Periwinkle,
  citecolor=Heather,
  linkcolor=teal,
  breaklinks=true,
  final,
}

\declaretheorem[name=Lemma, numbered=no]{lemma*}

\declaretheorem[name=Corollary, numbered=no]{corollary*}
\declaretheorem[name=Proposition, numberlike=theorem]{proposition}

\declaretheorem[name=Theorem]{theoremx}

\usepackage[noabbrev,capitalise]{cleveref} 
\crefdefaultlabelformat{#2\textup{#1}#3}
\crefname{theoremx}{Theorem}{Theorems}
\Crefname{theoremx}{Theorem}{Theorems}
\crefname{corollaryx}{Corollary}{Corollaries}
\Crefname{corollaryx}{Corollary}{Corollaries}

\usepackage{todonotes}

\makeatletter
\providecommand\@dotsep{5}
\def\listtodoname{List of Todos}
\def\listoftodos{\@starttoc{tdo}\listtodoname}
\makeatother
\NewDocumentEnvironment{mytodoenv}{m}{\hypersetup{hidelinks}\textsf{\textbf{#1:}} }{}

\usepackage{mymacros}

\title[Rigidity for spin fill-ins with scalar curvature bounded from below]{Rigidity for spin fill-ins with scalar curvature bounded from below}

\author{Bernd Ammann}

\address[Ammann]{Universität Regensburg, Fakultät für Mathematik, 93040 Regensburg, Germany}
\email{\href{mailto:bernd.ammann@mathematik.uni-regensburg.de}{bernd.ammann@mathematik.uni-regensburg.de}}

\author{Samuel Lockman}
\thanks{Both authors were funded by the Deutsche Forschungsgemeinschaft (DFG, German Research Foundation) – Project numbers 224262486; 
313840899}  

\address[Lockman]{Universität Regensburg, Fakultät für Mathematik, 93040 Regensburg, Germany}
\email{\href{mailto:samuel.lockman@mathematik.uni-regensburg.de}{samuel.lockman@mathematik.uni-regensburg.de}}

\hypersetup{
  pdfauthor={Bernd Ammann, Samuel Lockman} 
}
\begin{document}

\begin{abstract}
We establish the rigidity statement in the equality case of the hyperspherical-radius inequality of Brendle, Tsiamis, and Wang for compact spin fill-ins with scalar curvature bounded below \cite{brendle-fill-in}. More precisely, let $(M^{n\geq 3},g)$ be a compact, connected Riemannian spin manifold having a connected boundary $\Sigma$ and scalar curvature satisfying $\scal_g\geq -n(n-1)$. We prove that equality in the upper bound
\[
    \inf_{\Sigma}H\leq (n-1)\sqrt{1+\operatorname{Rad}(\Sigma)^{-2}}
\]
given by Brendle, Tsiamis, and Wang holds if and only if $(M,g)$ is isometric to a geodesic ball in hyperbolic space.
\end{abstract}
    
\maketitle
\section{Introduction}

A basic question in scalar curvature geometry is to ask to what extent a lower bound for the scalar curvature of a manifold with boundary controls the mean curvature of its boundary. More specifically, consider the following question. Given a closed Riemannian manifold $(\Sigma,\gamma)$, a smooth function $H\colon\Sigma \to \R$, and a constant $C\in\R$, one may ask whether there exists a compact Riemannian manifold $(M,g)$ such that
\[
    (\bd M,g|_{\bd M})\cong(\Sigma, \gamma),\qquad
    \mean_{\bd M}=H,\qquad
    \scal_g\geq C.
\]
Such a manifold is called a fill-in of the boundary data $(\Sigma,\gamma,H)$ with scalar curvature bounded below by $C$. For work related to this kind of problem, see \cite{Shi-Tam, Eichmair, JJ,Mantoulidis, Miao, Shi-Wang-Wei-2021, Shi-Wang-Wei-2022, McCormick, cecchinihirschzeidler, baer2026upperboundtotalmean, frenck2026surgerytotalmeancurvature, wang2024fillinstoriscalarcurvature}.

Directly related to this article is a result due to Hijazi, Montiel, and Roldán \cite{Hijazi}, who showed the following. 
\begin{theorem}[Hijazi, Montiel, Roldán]\label{Hijazi-bound}
    Let $(M^n,g)$ be a compact, connected Riemannian spin manifold of
    dimension $n \geq 3$ with connected boundary $\Sigma$. If $\scal_g \geq -n(n-1)$, then
    \[
        \inf_{\bd M} \mean
        \leq
        \sqrt{(n-1)^2+4\lambda^2},
    \]
    where $\lambda = \sqrt{\lambda_{\min}\bigl(\ReducedSpinDirac^2\bigr)}$
    and $\ReducedSpinDirac$ denotes the Dirac operator on the boundary.
\end{theorem}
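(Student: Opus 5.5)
We argue by contradiction through the standard spinorial boundary-value method, using imaginary Killing spinors as the model for the hyperbolic lower bound. Suppose $\inf_{\partial M}\mean > \sqrt{(n-1)^2+4\lambda^2}$. On the spinor bundle of $M$ we consider the modified connection $\nabla^{\pm}_X\psi=\nabla_X\psi\pm\tfrac{i}{2}X\cdot\psi$. Its Dirac-type operator is $D^{\pm}=D\mp\tfrac{in}{2}$. Integrating the Schrödinger–Lichnerowicz formula for this twisted operator gives
\[
\int_M |D^{\pm}\psi|^2-|\nabla^{\pm}\psi|^2
=\int_M \tfrac14\bigl(\scal_g+n(n-1)\bigr)|\psi|^2+\int_{\Sigma}\langle \ReducedSpinDirac\psi-\tfrac{\mean}{2}\psi\pm\tfrac{i}{2}(\ldots)\psi,\psi\rangle .
\]
The boundary integrand becomes, after the usual identification of the restricted spinor bundle with the intrinsic one (in even dimension, with two copies), the expression $\langle \ReducedSpinDirac\psi,\psi\rangle-\tfrac{\mean}{2}|\psi|^2+\tfrac{1}{2}\langle i\nu\cdot\psi,\psi\rangle$, up to signs depending on orientation conventions. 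Using $\scal_g\ge -n(n-1)$ and the Penrose-type inequality $|D^\pm\psi|^2\le n|\nabla^\pm\psi|^2$ (from the twistor decomposition), we obtain for any $\psi$ with $D^{\pm}\psi=0$ that
\[
0\le \tfrac{n-1}{n}\int_M|\nabla^\pm\psi|^2\le \int_\Sigma \Bigl(\langle\ReducedSpinDirac\psi,\psi\rangle-\tfrac{\mean}{2}|\psi|^2+\tfrac12\langle i\nu\cdot\psi,\psi\rangle\Bigr).
\]

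The second step constructs a harmonic spinor $D^\pm\psi=0$ on $M$ whose boundary value lies in a suitable APS-type spectral subspace, so that the boundary term becomes strictly negative. We impose the boundary condition that the projection of $\psi|_\Sigma$ onto the eigenspaces of the self-adjoint boundary operator $\ReducedSpinDirac+\tfrac{1}{2}i\nu\cdot$ (restricted appropriately) with eigenvalues above its lowest level vanishes. Equivalently, we take $\psi|_\Sigma$ in the span of eigenspinors of $\ReducedSpinDirac$ with $|\mu|=\lambda$, combined with the $i\nu$-twist. For the $2\times2$ block spanned by $\phi$ and $i\nu\cdot\phi$ with $\ReducedSpinDirac\phi=\lambda\phi$, the operator $\ReducedSpinDirac+\tfrac12 i\nu$ acts like $\begin{pmatrix}\lambda&1/2\\1/2&-\lambda\end{pmatrix}$ up to conventions. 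Its eigenvalues are $\pm\tfrac12\sqrt{4\lambda^2+1}$.

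This is the wrong scaling, so the correct modification must use $\tfrac{n-1}{2n}$-weighted terms. Indeed, the refined Hijazi-type computation yields the quantity $\sqrt{(n-1)^2+4\lambda^2}$ precisely once the factor $\tfrac{n}{n-1}$ from the Penrose inequality is combined with $\mean$. Concretely, one applies the estimate with the boundary operator $\ReducedSpinDirac$ and the mixing term $\tfrac{n-1}{2}i\nu\cdot$, whose lowest eigenvalue on the block is $-\tfrac12\sqrt{4\lambda^2+(n-1)^2}$. The boundary integrand is then at most $\bigl(\tfrac12\sqrt{(n-1)^2+4\lambda^2}-\tfrac{\mean}{2}\bigr)|\psi|^2<0$, which contradicts the nonnegativity above unless $\psi|_\Sigma=0$. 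That forces $\psi\equiv0$ by unique continuation.

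The main obstacle is the existence step. We must solve $D^\pm\psi=0$ with the chosen pseudo-local (APS-type) boundary condition and prescribed nonzero spectral component. We plan to show that this boundary condition is elliptic for $D^\pm$ and to use the integral inequality itself to prove that the kernel of the homogeneous problem is trivial under the strict mean-curvature hypothesis. Fredholm theory then gives surjectivity, and we solve the inhomogeneous problem with data $\phi$ in the lowest block. Making the constants match exactly, so that the Penrose factor combines with $\tfrac{n-1}{2}$ to give exactly $\sqrt{(n-1)^2+4\lambda^2}$, is the delicate algebraic point. We expect to verify it by a careful pointwise Cauchy–Schwarz estimate on the boundary.
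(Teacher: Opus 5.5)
The paper does not prove this theorem; it quotes it from Hijazi--Montiel--Rold\'an and mentions the alternative proof of Brendle--Tsiamis--Wang. Your outline therefore has to stand on its own. The overall plan is the right one: the twisted connection $\nabla^\pm$, an integrated Lichnerowicz/Reilly identity, and an APS-type spectral condition for a boundary operator of the form $\mathbf D+c\,i\nu\cdot$. But there are two genuine gaps.

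\textbf{The constant is not derived, and the proposed mechanism is wrong.} Your first identity has coefficient $\tfrac12$ in front of $i\nu\cdot$, and its boundary term has the opposite sign to the one you use afterwards. You notice this yields $\tfrac12\sqrt{4\lambda^2+1}$ and then simply replace the coefficient by $\tfrac{n-1}{2}$, attributing the change to the Penrose factor $\tfrac{n}{n-1}$. But for $D^\pm\psi=0$ the Penrose inequality is vacuous and plays no role. The coefficient comes from a direct computation. With $D^\pm=D\mp\tfrac{in}{2}$ one has pointwise
\[
|\nabla^\pm\psi|^2-|D^\pm\psi|^2=|\nabla\psi|^2-|D\psi|^2-\tfrac{n(n-1)}{4}|\psi|^2\pm(n-1)\,\mathrm{Im}\langle D\psi,\psi\rangle .
\]
The factor $n-1$ is the $n$ coming from $D^\pm$ minus the $1$ coming from $\nabla^\pm$. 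Moreover $\int_M\mathrm{Im}\langle D\psi,\psi\rangle=-\tfrac12\int_\Sigma\langle i\nu\cdot\psi,\psi\rangle$. Combining these with the spinorial Reilly formula gives, up to convention-dependent signs,
\[
\int_M\Bigl(|\nabla^\pm\psi|^2-|D^\pm\psi|^2+\tfrac14\bigl(\mathrm{scal}_g+n(n-1)\bigr)|\psi|^2\Bigr)=\int_\Sigma\Bigl(\langle A_\mp\psi,\psi\rangle-\tfrac{\mean}{2}|\psi|^2\Bigr),\qquad A_\mp:=\mathbf D\mp\tfrac{n-1}{2}\,i\nu\cdot ,
\]
where $\mathbf D$ is the boundary Dirac operator. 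The operator $i\nu\cdot$ is self-adjoint, squares to $1$, and anticommutes with $\mathbf D$. Hence $A_\mp^2=\mathbf D^2+\tfrac{(n-1)^2}{4}$, and the smallest positive eigenvalue of $A_\mp$ is exactly $\Lambda:=\tfrac12\sqrt{(n-1)^2+4\lambda^2}$. No Cauchy--Schwarz or reweighting is involved, but without this computation your text does not establish the bound.

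\textbf{The existence step is incomplete.} First, the boundary condition has to be the spectral one: $P_{\ge\Lambda}(A_\mp)(\psi|_\Sigma)=\phi$ with $A_\mp\phi=\Lambda\phi\neq0$. This leaves all components with eigenvalue $<\Lambda$ free and gives $\int_\Sigma\langle A_\mp\psi,\psi\rangle\le\Lambda\int_\Sigma|\psi|^2$; the relevant eigenvalue is $+\Lambda$, not the lowest one on the block. It is not equivalent to requiring $\psi|_\Sigma$ to lie in the span of the eigenspinors with $|\mu|=\lambda$. That is not an elliptic condition and in general has no solutions.

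Second, and more seriously, triviality of the kernel of the homogeneous problem does not give surjectivity. A Fredholm problem can be injective with nonzero cokernel, and you do not address the index. You must also show that the adjoint problem has trivial kernel. Use Green's formula $\int_M(\langle D\psi,\xi\rangle-\langle\psi,D\xi\rangle)=\int_\Sigma\langle\nu\cdot\psi,\xi\rangle$ together with $(D^\pm)^*=D^\mp$ and $\nu\cdot A_\mp=-A_\pm\,\nu\cdot$. Then the adjoint of $D^\pm$ with $P_{\ge\Lambda}(A_\mp)\psi|_\Sigma=0$ is $D^\mp$ with $\xi|_\Sigma\in\mathrm{range}\,P_{\le-\Lambda}(A_\pm)$. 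The $D^\mp$-version of the identity gives $0\le\int_\Sigma(-\Lambda-\tfrac{\mean}{2})|\xi|^2$. Hence $\xi|_\Sigma=0$, then $\nabla^\mp\xi=0$, so $\xi\equiv0$.

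You also need ellipticity of the condition: $P_{\ge\Lambda}(A_\mp)$ differs from an APS projection of $\mathbf D$ by a pseudodifferential operator of order $-1$. With trivial kernel and cokernel the problem is invertible. You then obtain $\psi$ with $\psi|_\Sigma\neq0$, and the identity gives $0\le\int_\Sigma(\Lambda-\tfrac{\mean}{2})|\psi|^2<0$ whenever $\inf\mean>2\Lambda$. With these two repairs your outline becomes a complete proof.
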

The hyperspherical radius, introduced by Gromov and Lawson in their work on Geroch's conjecture \cite{Gromov-Lawson-80}, is defined by
\[
    \rad(\Sigma)
    :=
    \sup\left\{
        r>0 \ \middle|\
        \begin{array}{c}
            \text{there exists a $1$-Lipschitz map }
            f\colon \Sigma\longrightarrow \Sphere^{n-1}(r)\\
            \text{of nonzero degree}
        \end{array}
    \right\}.
\]
This invariant is closely related to scalar curvature. Indeed, Llarull's theorem \cite{Llarull} may be viewed as giving an upper bound for the infimum of the scalar curvature of a closed spin manifold in terms of its hyperspherical radius. Recently, Bär \cite{Bar-Dirac-Eigen} proved the corresponding spectral estimate
\begin{equation}\label{bar-bound}
    \lambda
    =
    \sqrt{\lambda_{\min}\bigl(\ReducedSpinDirac^2\bigr)}
    \leq
    \frac{n-1}{2}\rad(\Sigma)^{-1},   
\end{equation}
which allowed Bär to give short and elegant proofs of both Geroch's conjecture and Llarull's theorem for spin manifolds.
Brendle, Tsiamis, and Wang~\cite{brendle-fill-in} observed that combining this estimate with Theorem \ref{Hijazi-bound} proves an inequality conjectured by Gromov \cite[page 110]{Four-lectures-Gromov}. Part of the contribution by Brendle, Tsiamis, and Wang \cite{brendle-fill-in} was also to give a new proof of Theorem \ref{Hijazi-bound}. Specifically, they prove the following.
\begin{theorem}[Brendle, Tsiamis, Wang]
    Let $(M^n, g)$ be a compact, connected Riemannian spin manifold of dimension $n \geq 3$ with connected boundary $\Sigma$. If $\scal_g \geq -n(n-1)$, then 
    \begin{equation}\label{mean-curv-ineq}
        \inf_{\Sigma} \mean \leq (n-1) \sqrt{1 + \mathrm{Rad}(\Sigma)^{-2}}.
    \end{equation}
\end{theorem}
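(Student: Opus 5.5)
The inequality \eqref{mean-curv-ineq} follows by chaining Theorem~\ref{Hijazi-bound} with Bär's spectral estimate \eqref{bar-bound}, as the paragraph above suggests. We may assume $\rad(\Sigma)>0$; if $\rad(\Sigma)=0$ the right-hand side of \eqref{mean-curv-ineq} is $+\infty$ and there is nothing to prove.

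\textbf{Step 1 (induced boundary spin structure).} The spin structure on $M$ induces one on $\Sigma=\bd M$, and $\Sigma$ is a closed, connected Riemannian spin manifold of dimension $n-1\geq 2$. Its Dirac operator $\ReducedSpinDirac$ is the one appearing in Theorem~\ref{Hijazi-bound}. This is exactly the setting in which Bär's estimate \eqref{bar-bound} applies. The estimate is stated for a closed spin manifold of dimension $n-1$ admitting $1$-Lipschitz maps of nonzero degree to $\Sphere^{n-1}(r)$; it holds for every spin structure on $\Sigma$, so the induced one is allowed.

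\textbf{Step 2 (apply Theorem~\ref{Hijazi-bound}).} Since $\scal_g\geq -n(n-1)$, Theorem~\ref{Hijazi-bound} gives
\[
\inf_\Sigma \mean \leq \sqrt{(n-1)^2+4\lambda^2}, \qquad \lambda=\sqrt{\lambda_{\min}\bigl(\ReducedSpinDirac^2\bigr)}.
\]

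\textbf{Step 3 (insert Bär's bound).} By \eqref{bar-bound}, $4\lambda^2\leq (n-1)^2\rad(\Sigma)^{-2}$. The map $t\mapsto \sqrt{(n-1)^2+t}$ is monotone, so
\[
\inf_\Sigma \mean \leq \sqrt{(n-1)^2\bigl(1+\rad(\Sigma)^{-2}\bigr)} = (n-1)\sqrt{1+\rad(\Sigma)^{-2}}.
\]

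\textbf{Main obstacle.} The only nontrivial point is Step~1: checking that Bär's estimate \eqref{bar-bound} applies to $\Sigma$ with the spin structure induced from $M$ and with the dimension shift $n\mapsto n-1$. Concretely, $\Sigma$ has dimension $n-1$ and the target sphere is $\Sphere^{n-1}(r)$. The constant $\frac{n-1}{2}$ in \eqref{bar-bound} is $\frac{\dim\Sigma}{2}$, which is the sharp value for the round sphere of that dimension. So \eqref{bar-bound} is Bär's estimate applied in dimension $\dim\Sigma$ with no loss.

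A proof independent of Theorem~\ref{Hijazi-bound}, in the spirit of Brendle--Tsiamis--Wang, would instead work directly on $M$. One would solve a boundary value problem for a twisted Dirac operator coupled to the pullback bundle under the degree-nonzero $1$-Lipschitz map $f$. One would then use the integrated Schrödinger--Lichnerowicz formula with the modified connection $\nabla_X+\frac{i}{2}X\cdot$ adapted to $\scal\geq -n(n-1)$. The degree condition yields a nontrivial solution via an index computation, and the boundary term then forces the mean curvature bound. The delicate part of that route is the index argument.
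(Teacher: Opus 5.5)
Your proof is correct and follows the route the paper attributes to Brendle, Tsiamis, and Wang: insert B\"ar's estimate \eqref{bar-bound}, i.e.\ $4\lambda^2\leq (n-1)^2\mathrm{Rad}(\Sigma)^{-2}$ for $\Sigma$ with its induced spin structure, into the bound of Theorem~\ref{Hijazi-bound} and use monotonicity of the square root. Two minor remarks: the case $\mathrm{Rad}(\Sigma)=0$ never occurs, since a degree-one collapse map from the closed oriented $\Sigma$ onto a sufficiently small round sphere is $1$-Lipschitz; and your closing sketch of a twisted-Dirac argument on $M$ is unnecessary, because the inequality already follows from the two cited results.
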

The goal of this article is to address the equality case of inequality~\eqref{mean-curv-ineq}. We will prove the following.
\begin{theoremx}\label{main-theorem}
    Equality in \eqref{mean-curv-ineq} holds if and only if $(M,g)$ is isometric to a closed hyperbolic geodesic ball $\closedBall_\rho\subset\Hyp^n$ of radius $\rho$, with $\sinh{\rho} = \mathrm{Rad}(\Sigma)$.
\end{theoremx}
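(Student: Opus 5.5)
The \enquote{if} direction is a direct computation, and the \enquote{only if} direction consists of making every inequality in the chain of Hijazi--Montiel--Roldán and Bär sharp and then reading off the geometry.

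\textbf{The \enquote{if} direction.}
For the hyperbolic ball $\closedBall_\rho$ the boundary is a round sphere of radius $\sinh\rho$, and its mean curvature is $(n-1)\coth\rho$. We have $\rad(\Sigma)=\sinh\rho$, because the identity map is $1$-Lipschitz of degree one, and any $1$-Lipschitz map of nonzero degree onto a larger sphere is ruled out, for instance by Bär's estimate \eqref{bar-bound} together with $\lambda=\frac{n-1}{2\sinh\rho}$ for the round sphere. Then
\[
(n-1)\sqrt{1+\sinh^{-2}\rho}=(n-1)\coth\rho,
\]
so equality holds in \eqref{mean-curv-ineq}.

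\textbf{The \enquote{only if} direction.}
Assume equality in \eqref{mean-curv-ineq}. Since \eqref{mean-curv-ineq} is obtained by chaining Theorem~\ref{Hijazi-bound} with \eqref{bar-bound}, equality forces two things:
\[
\lambda=\tfrac{n-1}{2}\rad(\Sigma)^{-1}\quad\text{and}\quad \inf H=\sqrt{(n-1)^2+4\lambda^2}.
\]
I would not try to use the equality case of Bär's estimate first, since it involves a supremum that need not be attained. Instead I would work with the Hijazi--Montiel--Roldán argument directly.

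The proof of Theorem~\ref{Hijazi-bound} runs as follows. One solves a boundary value problem for the modified Dirac operator $D\pm\frac{in}{2}$, or equivalently one takes a harmonic-type extension of a boundary eigenspinor under an APS/MIT-type boundary condition. One then applies the Reilly-type integral formula built from the Schrödinger--Lichnerowicz formula with the term $\scal_g+n(n-1)\ge 0$. Equality in the final inequality forces the following.
\begin{itemize}
\item The extended spinor $\psi$ is an imaginary Killing spinor, $\nabla_X\psi=\pm\frac{i}{2}X\cdot\psi$.
\item $\scal_g\equiv -n(n-1)$ wherever $\psi\ne0$.
\item $H$ is constant on $\Sigma$, equal to $\sqrt{(n-1)^2+4\lambda^2}$.
\end{itemize}

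The main obstacle is making this limiting argument rigorous. The infimum over Lipschitz maps and the minimal eigenvalue must be realised by an actual eigenspinor, and the equality in the integral inequality must be shown to pass pointwise. I would handle this by taking a first eigenspinor $\varphi$ of the boundary Dirac operator, which always exists, and running the argument with it. Equality then yields a nonzero solution of the Killing equation. Imaginary Killing spinors have no zeros, so $\psi$ is nowhere vanishing.

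\textbf{From the Killing spinor to the hyperbolic ball.}
A manifold carrying a nonzero imaginary Killing spinor has Einstein constant $-(n-1)$. By Baum's classification, $M$ is then locally a warped product. Concretely, the function $f=|\psi|^2$ satisfies $\operatorname{Hess}f=fg$. On a compact manifold with boundary whose boundary has constant $H>n-1$, $f$ attains an interior minimum at a unique point $p$. I expect to show that the vector field $V=\langle X\cdot\psi,\psi\rangle$ vanishes at $p$, which gives the hyperbolic type, i.e. $f=c\cosh r$ with $r=d(p,\cdot)$. The Hessian equation then yields $g=dr^2+\sinh^2 r\,g_{S^{n-1}}$ on the region reached by $\exp_p$. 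This identifies $M$ with a region in $\Hyp^n$ containing $p$ and bounded by the smooth hypersurface $\Sigma$.

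It remains to see that this region is a geodesic ball. I would use the boundary equality condition: the boundary spinor $\varphi$ is a Dirac eigenspinor with $\lambda$ minimal, and the equality case of Hijazi--Montiel--Roldán makes $\Sigma$ totally umbilic with constant mean curvature. A compact connected umbilic hypersurface in $\Hyp^n$ with $H>n-1$ is a geodesic sphere, so $M\cong\closedBall_\rho$. Finally, $\coth\rho=\sqrt{1+\rad(\Sigma)^{-2}}$ gives $\sinh\rho=\rad(\Sigma)$.
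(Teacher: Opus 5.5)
There is a genuine gap where you pass from the imaginary Killing spinor $\psi$ to the hyperbolic metric. A single nonzero imaginary Killing spinor only makes $g$ Einstein with constant $-(n-1)$. Baum's classification still allows non-hyperbolic local models, for example $dt^2+e^{2t}h$ with $h$ Ricci-flat carrying a parallel spinor. You obtain $g=dr^2+\sinh^2 r\,g_{\Sphere^{n-1}}$ only if $f=|\psi|^2$ has a critical point in $M$.

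Your reason for that (``$\Sigma$ has constant $H>n-1$'') is not valid, and it fails already in the model case. If $\psi$ is any imaginary Killing spinor and $H$ is constant, its restriction satisfies $\widetilde D^2(\psi|_\Sigma)=\tfrac14\bigl(H^2-(n-1)^2\bigr)\psi|_\Sigma$ for the boundary Dirac operator $\widetilde D$. So on $\closedBall_\rho$, \emph{every} imaginary Killing spinor of $\Hyp^n$ realises the equality. This includes type~I spinors ($|\psi|^2=e^{b}$ with $b$ a Busemann function) and type~II spinors centred outside the ball; for these, $|\psi|^2$ has no critical point in $M$. Since you start from an arbitrary first eigenspinor $\varphi$, nothing in your argument selects a spinor whose Dirac current vanishes inside $M$.

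Even granting such a point $p$, two further steps are missing. You would need $M$ to be star-shaped with respect to $p$ (e.g. $\partial_\nu|\psi|^2>0$ on $\Sigma$) before the polar coordinates cover $M$. And the umbilicity of $\Sigma$ that you attribute to the equality case of Hijazi--Montiel--Roldán is not produced by that argument. Alexandrov's theorem in $\Hyp^n$ could replace it, but only once you know $M\subset\Hyp^n$.

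The missing idea is precisely the step you chose to avoid. The supremum in $\rad(\Sigma)$ is attained: use Arzel\`a--Ascoli, and note that degree survives uniform limits. Brendle--Tsiamis--Wang use the equality case of Bär's estimate to get $\Sigma\cong\Sphere^{n-1}(\rad(\Sigma))$. On the round sphere the minimal eigenspace is large enough that their Corollary~7 gives a global \emph{frame} of imaginary Killing spinors, not just one.

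That frame is what the paper uses. It forces the curvature of $\widehat\nabla$ to vanish, and injectivity of Clifford multiplication on $\Lambda^2$ then gives constant sectional curvature $-1$ directly, with no critical point or Baum classification needed. Umbilicity follows from the Gauss equation: round $\Sigma$ and sectional curvature $-1$ give $\kappa_k\kappa_\ell=\coth^2\rho$, and with $H=(n-1)\coth\rho$ this forces $\kappa_k=\coth\rho$. The global identification is obtained by gluing $M$ to the exterior of a hyperbolic ball of radius $\rho$ and showing the resulting complete hyperbolic manifold is $\Hyp^n$. Your ``if'' direction is fine.
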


\section{Notation}
Given a Riemannian spin manifold $(M, g)$ of dimension $n$, we let $\SpinBdlv M$ denote the spinor bundle associated to an irreducible representation of the complexified Clifford algebra $\mathbb{C}\ell_n$. A section $\varphi \in \Gamma(\SpinBdlv M)$ is said to be an imaginary Killing spinor if it is parallel with respect to the connection $\widehat{\nabla}$ on $\SpinBdlv M$, defined by
\begin{equation}
    \widehat{\nabla}_X \psi : = \nabla_X \psi + \frac{i}{2} X \cdot \psi.
\end{equation}
In this article, we will always let $\nu^M$ denote the outward-pointing unit normal for a manifold $M$ with boundary. The second fundamental form $\II$ of a hypersurface $Q \subset (M, g)$ with unit normal $\nu$ is defined with the sign convention
\[
    \II(X, Y) := g(\nabla_X \nu, Y).
\]
We let $\Shape$ denote the corresponding shape operator, and we put $\mean:= \tr \II$. We write $\FullCurv$ to denote the Riemann curvature $(1,3)$-tensor of $(M, g)$ with the sign convention that $g( R(X,Y)Y,X)\geq 0$ on round spheres. Let $\Curv$ be the corresponding curvature on the spinor bundle $\SpinBdlv M$. These are related via the standard formula
\begin{equation}\label{spin-curvature}
    \Curv_{X,Y} = \frac{1}{4} \sum_{j,k=1}^{n} g( \FullCurv(X,Y)e_j,e_k)\, e_j \cdot e_k,
  \end{equation}
where $\{e_j\}_{j=1}^n$ is an orthonormal basis of $T_pM$ for $p\in M$.

\section{Proof of Theorem \ref{main-theorem}}

Our contribution is the following Proposition.
\begin{proposition}\label{main-prop}
    Let $(M^n,g)$ be a compact, connected Riemannian spin manifold of dimension $n\geq 3$ with connected boundary $\Sigma$. Assume that the complex spinor bundle $\SpinBdlv M$ admits a global frame of imaginary Killing spinors. Assume moreover that $(\Sigma,g|_\Sigma)$ is isometric to the round sphere $\Sphere^{n-1}(r)$ and that, with respect to the outward unit normal, its mean curvature is
\[
    \mean=(n-1)\sqrt{1+r^{-2}}.
\]
Then, with $r=\sinh\rho$, we have that $(M,g)$ is isometric to the closed geodesic ball $\closedBall_\rho\subset\Hyp^n$.
\end{proposition}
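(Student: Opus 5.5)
Since $\SpinBdlv M$ has a global frame of imaginary Killing spinors, $(M,g)$ is locally isometric to $\Hyp^n$. Indeed, differentiating $\nabla_X\psi=-\tfrac i2 X\cdot\psi$ gives $\Curv_{X,Y}\psi=-\tfrac14(X\cdot Y-Y\cdot X)\cdot\psi$ up to the standard sign bookkeeping, i.e. $\Curv_{X,Y}=-\tfrac14(XY-YX)$ as an endomorphism, because a frame spans every fiber. By \eqref{spin-curvature} and the injectivity of $\Lambda^2\to\mathbb{C}\ell_n$, this forces $\FullCurv(X,Y)Z=-(g(Y,Z)X-g(X,Z)Y)$, i.e. constant sectional curvature $-1$.

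The plan is then to develop $M$ into $\Hyp^n$ and compare its boundary with the round sphere. $M$ is a compact hyperbolic manifold with boundary, so its universal cover $\widetilde M$ has a local isometry (developing map) $D\colon\widetilde M\to\Hyp^n$. Next I use the boundary data. Taking a Killing spinor $\psi$ and restricting it to $\Sigma$ in the standard way, the restrictions satisfy a boundary Killing-type equation involving $\II$. The cleaner route is intrinsic, via the Gauss equation. On $\Sigma$ the intrinsic curvature is $r^{-2}$ and the ambient curvature is $-1$, so $\II\wedge\II=(r^{-2}+1)\,\gamma\wedge\gamma$ as curvature operators. For $n-1\geq 3$ this forces $\II=\pm\sqrt{1+r^{-2}}\,\gamma$ pointwise; for $n-1=2$ it gives $\det\II=1+r^{-2}$. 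In both cases, combining with $\mean=(n-1)\sqrt{1+r^{-2}}$ (AM–GM in the case $n=3$) yields $\II=\sqrt{1+r^{-2}}\,\gamma=\coth\rho\,\gamma$. So $\Sigma$ is totally umbilic with the principal curvatures of a geodesic sphere of radius $\rho$ in $\Hyp^n$.

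Next, $\Sigma$ is simply connected ($n-1\ge2$), so it lifts to $\widetilde M$. $D$ restricted to a collar of $\Sigma$ is an immersion of a round sphere with $\II=\coth\rho\,\gamma$ and intrinsic radius $\sinh\rho$. By rigidity of umbilic hypersurfaces in $\Hyp^n$ (fundamental theorem of hypersurfaces), $D(\Sigma)$ is a geodesic sphere $S_\rho$ of radius $\rho$, with $\nu$ mapped to its outward normal; the mean curvature is positive with respect to the outward normal of $M$, so $M$ lies on the ball side near $\Sigma$. It remains to show that $M$ itself maps isometrically onto $\closedBall_\rho$. I would glue $M$ along $\Sigma$ to $\Hyp^n\setminus B_\rho$, using the collar identification; the second fundamental forms match, so the result $N$ is a smooth complete hyperbolic manifold. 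It is complete because $M$ is compact and the exterior of the ball is complete. Hence $N=\Hyp^n/\Gamma$. Since $N$ contains the end $\Hyp^n\setminus B_\rho$, which is simply connected for $n\ge3$ and isometrically embedded, its volume growth is that of $\Hyp^n$. More concretely, the geodesic sphere $S_R$, $R>\rho$, in the end bounds in $N$ the region $M\cup(B_R\setminus B_\rho)$. The exponential map at a point shows injectivity radius arguments force $\Gamma$ trivial: a nontrivial deck transformation would identify points of the end at bounded distance, but along the end the metric is exactly $\Hyp^n$ near infinity, and a discrete group acting freely with quotient containing an isometric copy of $\Hyp^n\setminus B_\rho$ must be trivial, since the orbit of a point far out would have to lie in a small ball. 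So $N\cong\Hyp^n$, and $M$ is the complement of the exterior of $S_\rho$, i.e. $\closedBall_\rho$.

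The main obstacle is this last global step, passing from local isometry plus matching boundary to a global isometry. I would try first the gluing-plus-completeness argument above. A fallback is the developing map: show $D|_{\widetilde M}$ has image in $\closedBall_\rho$ and is a covering of $\closedBall_\rho$, by properness and boundary matching. It is then a diffeomorphism, and compactness forces $\pi_1(M)$ trivial.
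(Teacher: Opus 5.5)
Your first steps match the paper: flatness of $\hatnabla$ gives $\sect\equiv-1$, the Gauss equation together with the mean curvature gives $\II=\coth\rho\,g|_\Sigma$, and you glue $M$ to $E=\Hyp^n\setminus\interior{\Ball_\rho}$ to get a complete hyperbolic $N=\Hyp^n/\Gamma$. One small point on the gluing: matching first and second fundamental forms alone gives only a $C^{1,1}$ metric in general. Here it is smooth because on both sides the metric in normal coordinates solves the same Jacobi equation $a''=a$ with the same initial data. The paper makes this explicit: both sides have the form $\D u^2+(\sinh(\rho+u)/\sinh\rho)^2g|_\Sigma$.

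\textbf{The gap is in the decisive step, showing $\Gamma$ is trivial.} None of the reasons you give proves it.
\begin{itemize}
\item \emph{Injectivity radius.} It is true that a point $y$ of the end at distance $s$ from $\Sigma$ has injectivity radius at least $s$. But the same holds in the funnel of $\Hyp^n/\langle\gamma\rangle$ for a loxodromic $\gamma$, so this alone cannot rule out a nontrivial $\Gamma$.
\item \emph{Orbits.} The claim that ``the orbit of a point far out would have to lie in a small ball'' has it backwards. For a lift $\tilde y$, every $\gamma\tilde y$ with $\gamma\neq1$ is at distance at least $2s$ from $\tilde y$.
\item \emph{Volume growth.} Containing an isometric copy of the exterior only gives $\operatorname{vol}B^N_R(p)\geq c\,V_{\Hyp^n}(R)$ for some $c>0$ depending on where $p$ sits relative to $\Sigma$. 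Bishop--Gromov rigidity would need the ratio to tend to $1$, which you do not establish.
\end{itemize}

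\textbf{The missing idea.} You need an argument that actually uses the simple connectivity of the end, which you mention but never exploit. The paper argues as follows.
\begin{enumerate}
\item Since $\interior{E}$ is simply connected for $n\geq3$, each component $\widehat E$ of $q^{-1}(\interior{E})$ is mapped isometrically onto $\interior{E}$.
\item By the extension property of local isometries between connected open subsets of $\Hyp^n$, $\widehat E=\Hyp^n\setminus\closedBall$ for some closed geodesic ball $\closedBall$.
\item A nontrivial $\gamma\in\Gamma$ would map $\widehat E$ onto a different, hence disjoint, component $\Hyp^n\setminus\gamma\closedBall$.
\item But the complements of two bounded balls always intersect, so $\Gamma$ is trivial.
\end{enumerate}

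\textbf{Your fallback has the same hole.} The boundary $\partial\widetilde M$ has $|\pi_1(M)|$ components. Each develops onto some sphere of radius $\rho$, but a priori onto different ones. Showing that $D$ covers a single ball $\closedBall_\rho$ needs a real local-to-global argument, for instance a convexity theorem for hyperbolic manifolds with locally convex boundary, and you do not supply one.
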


\begin{proof}
Let $\widehat{\Curv}$ denote the curvature of $\hatnabla$. Since the sections $s_1,\ldots,s_m$ form a pointwise basis and are $\hatnabla$-parallel, one has $\widehat{\Curv}_{X,Y}=0$ for all vector fields $X,Y$. On the other hand, we compute that 
\[
    \widehat{\Curv}_{X,Y}
    =
    \Curv_{X,Y}
    -\frac{1}{4}[X\mathbin{\cdot},Y\mathbin{\cdot}], 
\]
and so by \eqref{spin-curvature}, we have that
\[
    \sum_{j,k=1}^{n} g( \FullCurv(X,Y)e_j,e_k)\, e_j \cdot e_k = [X\mathbin{\cdot},Y\mathbin{\cdot}].
\]
Set $X = e_a$ and $Y = e_b$, with $a \neq b$, and compute that 
\[
    \sum_{j,k=1}^{n} g(\FullCurv(e_a,e_b)e_j,e_k)\, e_j \cdot e_k = [e_a\cdot,e_b\cdot] = e_a \cdot e_b - e_b \cdot e_a  = 2e_a \cdot e_b.
\]
Since the map $\Lambda^2TM \to \mathrm{End}(\SpinBdlv M)$ given by Clifford multiplication is injective, the antisymmetries of $\FullCurv$ imply that 
\[
    \FullCurv(e_a, e_b) e_j = \delta_{aj}e_b - \delta_{bj}e_a, 
\]
which means that the curvature tensor is given by
\[
    \FullCurv(X,Y)Z
    =
    -\bigl(g(Y,Z)X-g(X,Z)Y\bigr).
\]
Hence
\[
    \sect_M\equiv -1.
\]

At a point $x\in \Sigma$, let $e_1,\ldots,e_{n-1}$ be an orthonormal basis of principal directions at $T_x\Sigma$ with principal curvatures $\kappa_1,\ldots,\kappa_{n-1}$. Since $\Sigma$ is round of radius $r$, the Gauss equation gives, for $k\neq \ell$,
\[
    r^{-2} =
    \sect_{\Sigma}(e_k \wedge e_\ell)=
    \sect_M(e_k\wedge e_\ell)+\kappa_k\kappa_\ell
    =
    -1+\kappa_k\kappa_\ell.
\]
Thus
\begin{equation}
    \kappa_k\kappa_\ell=1+r^{-2}
\end{equation}
for all $k\neq\ell$. Put $c:=\sqrt{1+r^{-2}}=\coth\rho$. If $n\geq 4$, the relations $\kappa_k\kappa_\ell=c^2$ imply that all principal curvatures are equal, and the identity $\kappa_1+\cdots+\kappa_{n-1}=(n-1)c$ gives $\kappa_k=c$ for every $k$. If $n=3$, then
\[
    \kappa_1+\kappa_2=2c,
    \qquad
    \kappa_1\kappa_2=c^2,
\]
so again $\kappa_1=\kappa_2=c$. Therefore $\II=\coth\rho\,g|_\Sigma$.
For the outward unit normal $\nu^M$ of~$M$ and for sufficiently small $\varepsilon > 0$, define
\begin{align*}
    \Phi_M : \Sigma \times [0, \varepsilon) &\to M \\
    (x,t) &\mapsto \exp^M_x(-t\nu^M(x)).
\end{align*}
Choose an isometry $f:(\Sigma,g|_\Sigma)\longrightarrow\bd\Ball_\rho\subset\Hyp^n$ and set
\[
    E:=\Hyp^n\setminus\interior{\Ball_\rho},
  \]
where $\interior{\Ball_\rho}$ denotes the open geodesic ball in $\Hyp^n$ of radius $\rho$.  
For the outward unit normal $\nu^E$  of $E$ and for sufficiently small $\varepsilon > 0$, define
\begin{align*}
    \Phi_E : \Sigma \times [0, \varepsilon) &\to E \\
    (x,t) &\mapsto \exp^E_{f(x)}(-t\nu^E(f(x))).
\end{align*}
We glue $M$ and $E$ along $f$ using the collars given by $\Phi_M$ and $\Phi_E$ to obtain the smooth manifold
\[
    N:=M\cup_f E.
\]
It remains to check that one can also smoothly glue the two metrics along the hypersurface $\Sigma$. Fix $x \in \Sigma$ and use the notation $\gamma(t) = \Phi_M(x, t)$. For $v \in T_x\Sigma$, the corresponding normal Jacobi field $V(t) = \D\Phi_M|_{(x,t)}(v,0)$ satisfies
\[
    \frac{D^2V}{dt^2} + \FullCurv(V, \gamma') \gamma' = 0, \qquad V(0) = v, \qquad V'(0) = -\Shape(v) = -\coth(\rho)v.
\]
Let \(P_t\) denote parallel transport along \(\gamma\). Since \(M\) has
constant sectional curvature \(-1\), the Jacobi equation for a Jacobi field of the form $V(t)=a(t)P_t(v)$ reduces to $a''(t)-a(t)=0$, and the initial conditions become $a(0)=1, a'(0)=-\coth(\rho)$. Consequently, we have that
\[
    a(t)
    =
    \cosh(t)-\coth(\rho)\sinh(t)
    =
    \frac{\sinh(\rho-t)}{\sinh(\rho)},
\]
and so it follows that
\[
    \D\Phi_M|_{(x,t)}(v,0)
    =
    V(t)
    =
    \frac{\sinh(\rho-t)}{\sinh(\rho)}P_t(v).
\]
Hence, we have that
\[
    \Phi_M^*g
    =
    \DD t^2
    +
    \left(\frac{\sinh(\rho-t)}{\sinh\rho}\right)^2g|_\Sigma.
\]

On the exterior side $E$, let $s\geq 0$ be the normal distance from $\bd\Ball_\rho$. Writing the hyperbolic metric in polar coordinates, we see that the hyperbolic metric in the corresponding collar given by $\Phi_E$ is given by
\[
    \D s^2
    +
    \left(\frac{\sinh(\rho+s)}{\sinh\rho}\right)^2g|_\Sigma.
\]
Hence, we see that on both sides of the gluing hypersurface, the metric is given by the same formula $\D u^2+\left(\frac{\sinh(\rho+u)}{\sinh\rho}\right)^2g|_\Sigma$. Hence the glued metric on $N$ is smooth. Also note that $\sect_N\equiv -1$. Moreover, $N$ is complete, because $M$ is compact and~$N$ agrees outside a compact set with the complete hyperbolic exterior $E$. Hence, by the classification of complete hyperbolic space forms, there exists a discrete subgroup $\Gamma \subset \mathrm{Isom}(\Hyp^n)$
such that $N$ is isometric to $\Hyp^n/\Gamma$. Let $q:\Hyp^n\longrightarrow N$ be the universal covering. Since $n\geq 3$, the exterior $\interior{E}=\Hyp^n\setminus\closedBall_\rho$ is simply connected. Hence every connected component $\widehat{E}$ of $q^{-1}(\interior{E})$ is mapped isometrically onto $\interior{E}$. By the standard extension property for local isometries between connected open subsets of hyperbolic space, there is a geodesic ball $\closedBall \subset\Hyp^n$ such that
\[
    \widehat E=\Hyp^n\setminus \closedBall.
\]

If $\gamma\in\Gamma$ is nontrivial, then $\gamma\widehat E$ and $\widehat E$ are distinct components of $q^{-1}(\interior{E})$ and hence they are disjoint. On the other hand, we have that $\widehat E\cap\gamma\widehat E = \Hyp^n\setminus(\closedBall\cup\gamma \closedBall)$, which is a nonempty set because two bounded geodesic balls cannot cover $\Hyp^n$. Therefore, $\Gamma$ is trivial, and it follows that $N\cong\Hyp^n$. Under this isometry, the subset~$\interior{E}$ is sent to the exterior of a closed geodesic ball of radius $\rho$. The complement of $\interior{E}$ in $N$ is $M$, so we have that
\[
    (M,g)\cong\closedBall_\rho\subset\Hyp^n,
\]
which finishes the proof.
\end{proof}

\begin{proof}[Proof of Theorem \ref{main-theorem}]
    By \cite[Corollary 7]{brendle-fill-in}, the spinor bundle $\SpinBdlv M$ admits a frame consisting of imaginary Killing spinors $\{s_{\alpha}\}_{\alpha = 1}^{m}$, $m = 2^{\lfloor \frac{n}{2} \rfloor}$. In the proof of \cite[Corollary 7]{brendle-fill-in}, it is shown that $(\Sigma, g\vert_{\Sigma}) \cong \Sphere^{n-1}(r)$, where $r := \rad(\Sigma)$. By \cite[Proposition 5]{brendle-fill-in}, combined with \eqref{bar-bound}, we get for each $\alpha \in \{1, \hdots, m\}$ that 
    \begin{align}\label{prop5}
        \int_\Sigma \mean\abs{s_{\alpha}}^2\,\dS \leq (n-1)\sqrt{1+r^{-2}}\int_\Sigma \abs{s_{\alpha}}^2\,\dS, 
    \end{align}
    and since we are in the equality case of \eqref{mean-curv-ineq}, we have that 
    \begin{align*}
        0 &\leq \int_{\Sigma} (\mean - \inf_{\Sigma} \mean) \abs{s_{\alpha}}^2\,\dS = \int_{\Sigma} (\mean - (n-1)\sqrt{1+r^{-2}}) \abs{s_{\alpha}}^2\,\dS \overset{\eqref{prop5}}{\leq} 0.
    \end{align*}
    Since each $s_{\alpha}$ is nowhere vanishing, we see that $\mean = (n-1) \sqrt{1 + r^{-2}}$. Hence we may apply Proposition \ref{main-prop} and conclude that $(M,g)$ is isometric to the closed geodesic ball $\closedBall_{\rho}$ of radius $\rho$ with $\sinh{\rho}=r=\rad(\Sigma)$. Conversely, if $(M,g)$ is isometric to a closed geodesic ball, then equality in \eqref{mean-curv-ineq} follows from a straightforward calculation.  
\end{proof}

\newpage
\bibliographystyle{alpha}
\bibliography{literature.bib}
\end{document}